\theoremstyle{remark}
\newtheorem{remark}{Remark}
\theoremstyle{definition}
\newtheorem{definition}{Definition}
\newtheorem{assumption}{Assumption}
\theoremstyle{plain}
\newtheorem{theorem}{Theorem}
\newtheorem{corollary}{Corollary}
\newtheorem{openproblem}{Open Problem}
\numberwithin{equation}{section}
\newcommand{\bint}[1]{\oint_{\partial\Omega}#1\;\text{dS}\,}
\newcommand{\vint}[1]{\int_{\Omega}#1\;\text{d}x\,}
\newcommand{\abs}[1]{\left|#1\right|}
\newcommand{\norm}[1]{\left\lVert#1\right\rVert}
\newcommand{\bintk}[1]{\oint_{\partial\Omega_k}#1\;\text{dS}\,}
\newcommand{\vintk}[1]{\int_{\Omega_k}#1\;\text{d}x\,}
\title{Gradient Flows for Dirichlet Eigenvalues}
\author{Yannick Holle}
\address[Y. Holle]{\newline
\indent RWTH Aachen University \newline%
\indent Institut f{\"u}r Mathematik\newline
\indent Templergraben 55\newline
\indent D-52062 Aachen}%
\email{holle@eddy.rwth-aachen.de}%
\thanks{This work is part of the author’s Master's Thesis at the RWTH Aachen University. The author is very grateful to his advisor Alfred Wagner for many helpful discussions and his continuous support.
This work has been partially funded by the Deutsche Forschungsgemeinschaft
(DFG, German Research Foundation) Projektnummer 320021702/GRK2326
Energy, Entropy, and Dissipative Dynamics (EDDy).}
\date{\today}
\subjclass[2010]{49Q10, 58J30, 53C44, 39B62} %
\keywords{minimizing movement, curve of maximal slope, gradient flow, shape optimization, Laplacian eigenvalue, Brunn-Minkowski inequality, second domain variation}%
\begin{document}

\maketitle
\thispagestyle{empty}
\begin{abstract}
We are interested in existence of gradient flows for shape functionals especially for first Laplacian eigenvalues. We introduce different techniques to prove existence and use different formulations for gradient flows. We apply a compactness argument to prove existence of generalized minimizing movements for Dirichlet and Robin boundary conditions with respect to several common metrics. Moreover, we use Brunn-Minkowski inequalities to prove $\alpha$-convexity and existence of contraction semi-groups for Dirichlet boundary conditions on convex bodies. Finally, we give a proof of $\alpha$-convexity for Robin boundary conditions by the second domain variation since we do not have a Brunn-Minkowski inequality in this case.
\end{abstract}

\section{Introduction}
\label{sec:Introduction}
The problem of finding the drum with the lowest fundamental frequency can be traced back to Lord Rayleigh \cite{Ra1894} and can be reformulated in finding the domain of given volume with the lowest Dirichlet Laplacian eigenvalue. This problem was solved independently by Krahn \cite{Kr1925} and Faber \cite{Fa1923} by the so-called Faber-Krahn inequality. Since this is a (shape) optimization Problem, it is a natural aim to study the corresponding energy landscape and its properties. We restrict ourselves to the question:
\textit{Can the optimal domain be found by deforming a given initial domain such that the infinitesimal decent is as steep as possible?}
These curves of steepest decent are called gradient flows. So, we are interested in gradient flows for the first Laplacian eigenvalue with certain boundary conditions.\\
Gradient flows are usually studied in Hilbert spaces, where Riesz's representation theorem defines a gradient for sufficiently smooth functionals. It turns out, that gradient flows can also be defined in metric spaces. One of the standard reference for this theory is the seminal book by Ambrosio, Gigli and Savare \cite{Ambrosio2005}. We use two common notions for gradient flows in metric spaces. The first one is the so-called (generalized) minimizing movement defined by an implicit Euler scheme, which exists under weak assumptions on the functional and the metric space. The second one is the solution to an evolution variational inequality. This notion is stronger and unique solutions exist if, among other assumptions, an $\alpha$-convexity property is satisfied. The evolution variational inequality induces an $\alpha$-contraction semi-group and gives more regularity than the (generalized) minimizing movement.\\
Gradient flows for shape functionals and particularly for Laplacian eigenvalues were first studied by Bucur, Buttazzo and Stefanelli \cite{Bucur2012}. They proved some general results for evolutions of capacity measures w.r.t. the $\gamma$-convergence and for spectral optimization problems. They studied generalized minimizing movements for spectral functionals which are decreasing in the sense of set inclusion and for spectral functionals on convex sets. Furthermore, they proved and disproved some geometric properties of these gradient flows. In the Master thesis of the author \cite{Holle2018}, new techniques are derived to prove $\alpha$-convexity of Laplacian eigenvalues which implies existence of contraction semi-groups. These results are presented in this paper in a slightly modified way and some details are omitted.\\
A key problem in studying gradient flows for shape problems is the choice of the metric space. Since there are many metrics on the space of domains which behave very differently and induce different topologies, the behavior of the obtained gradient flows can be very different as well. Especially, if we consider contraction semi-groups, the geometry induced by the metric is crucial. The involved $\alpha$-convexity condition controls the behavior of the functional along paths and contains a geometric restriction on the metric space.\\
We give two proofs of the $\alpha$-convexity property. The first one uses the Brunn-Minkowski inequality for the first Dirichlet eigenvalue on convex bodies.\linebreak Brunn-Minkowski inequalities are convexity/concavity inequalities and hold true for several shape functionals and eigenvalues on convex domains. For more examples like Newton capacity, torsional rigidity and Monge-Amp\`ere eigenvalue, see \cite{Colesanti2005}. The obtained gradient flows are with respect to the $L^2$-metric for convex bodies. The second approach uses domain perturbations and the second domain variation of the first Robin eigenvalue on a class of star-shaped sets. A global estimate for the second domain variation leads to the $\alpha$-convexity for a metric induced by the $W^{1,2}$-norm of boundary parametrization.\\
The paper is organized as follows: First, we introduce gradient flows in metric spaces (Section \ref{sec:GradientFlows}) and recall basic properties of Laplacian eigenvalues (Section \ref{sec:RobinEigenvalues}). In Section \ref{sec:RobinEigenvalueAndGMM}, we prove existence of generalized minimizing movements for domains with uniform $\epsilon$-cone property and constrained volume. Next, we consider contraction semi-groups and $\alpha$-convexity. The approaches use the Brunn-Minkowski inequality on convex bodies (Section \ref{sec:ConvexbodiesandtheBrunn-Minkowskiinequality}) and the second domain variation on star-shaped domains (Section \ref{sec:AlphaConvexityOfTheRobinEigenvalue}). 

\section{Gradient flows}
\label{sec:GradientFlows} 
We recall two notions of gradient flows in metric spaces which coincide with the notion of a gradient flow if a Hilbertian structure is given. \\
Throughout this section $(X,d)$ is a complete metric space, $u_0$ is an initial condition and $\phi\colon X\to (-\infty,+\infty]$ is a proper functional. A functional is called \textit{proper} if its \textit{effective domain} $D(F)=\{ x\in X \,|\,F(x)<+\infty\}$ is non-empty.

\subsection{Generalized Minimizing Movements}
\label{sec:GeneralizedMinimizingMovements}
For every fixed $h>0$ the \textit{implicit Euler scheme} of time step $h$ and initial condition $u_0\in X$ consists in constructing a function $u_h(t)=w(\lfloor t/h\rfloor)$ (where $\lfloor\cdot\rfloor$ denotes the floor function) by
\begin{equation}
w(0)=u_0, \quad  w(n+1)\in \operatorname*{argmin}\limits_{y\in X}\Phi(h,w(n),y),
\label{eq:}
\end{equation}
where 
\begin{equation}
\Phi(h,x;y):=\phi(y)+\frac{1}{2h}d^2(x,y),
\label{eq:DefinitionPhi}
\end{equation}
for $h>0$ and $x,y\in X$. 
\begin{definition}
For a given functional $\Phi$ as defined in $(\ref{eq:DefinitionPhi})$ and an initial condition $u_0\in X$,
we say that a curve $u\colon[0,+\infty)\rightarrow X$ is a \textit{minimizing movement} for $\Phi$ starting at $u_0$ if
\begin{equation}
u_h(t)\to u(t) \quad \text{as }h\to 0,\text{ for all }t\in[0,+\infty). 
\label{eq:}
\end{equation}
We denote the collection of all minimizing movements for $\Phi$ starting at $u_0$ by \linebreak $MM(\Phi;u_0)$.
Analogously, we say that a curve $u\colon[0,\infty)\rightarrow X$ is a \textit{generalized minimizing movement} for $\Phi$ starting at $u_0$ if the latter convergence holds for a subsequence $h_n\to 0$ and we write $u\in GMM(\Phi;u_0)$.
\end{definition}
We state a general existence result for generalized minimizing movements (see Proposition 2.2.3 in \cite{Ambrosio2005}).
\begin{theorem}\label{thm:ExistenceGMM}
Let $(X,d)$ be a complete metric space. Fix $u_0\in D(\phi)$ and let $\phi\colon X\to (-\infty,+\infty]$ be a proper function such that
\begin{itemize}
	\item $\phi\colon (X,d)\to (-\infty,+\infty]$ is lower semi-continuous;
	\item there exists $h_*>0$ and $x_*\in X$ such that 
		\begin{equation*}
			\inf_{y\in X}\Phi(h_*,x_*;y)>-\infty;
		\end{equation*}
	\item for every sequence $\{x_n\}\subset X$ with 
	\begin{equation*}
	\sup_n \phi(x_n)<+\infty\quad\text{ and } \quad\sup_{n,m}d(x_n,x_m)<+\infty,
	\end{equation*}
	there exists a convergent subsequence.	
\end{itemize}
Then, there exists a curve $u\in AC^2_{loc}([0,+\infty);X)$ with $u(0+)=u_0$ and a sequence $h_n\to 0$ such that
\begin{equation*}
u_{h_n}(t)\to u(t) \quad \text{as }n\to +\infty.
\end{equation*}
In particular, the set $GMM(\Phi;u_0)$ is non-empty.
\end{theorem}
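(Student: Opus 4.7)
The strategy is the classical De Giorgi minimizing movements argument: first show the discrete variational scheme is well posed, then derive the fundamental energy-dissipation estimate from each minimization step, then use it together with the compactness hypothesis to extract a convergent subsequence whose limit is absolutely continuous.

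\emph{Step 1: well-posedness of the implicit Euler scheme.} The coercivity hypothesis at the single pair $(h_*,x_*)$ must first be upgraded to a lower bound of the form $\phi(y)\geq -A-Bd^2(x_*,y)$ for constants $A,B\geq 0$; this follows by rearranging $\phi(y)+\tfrac{1}{2h_*}d^2(x_*,y)\geq m>-\infty$. Combined with the triangle inequality, this yields that for every $x\in X$ and every $h\in(0,h_*)$ the map $y\mapsto\Phi(h,x;y)$ is bounded below and its sublevel sets have bounded $\phi$-values and bounded $d(x,\cdot)$. Applying the compactness hypothesis and lower semi-continuity (note $d^2(x,\cdot)$ is continuous), the direct method furnishes a minimizer $w(n+1)$ at each step, so $u_h$ is well defined for $h$ small enough.

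\emph{Step 2: discrete energy estimate.} Comparing $w(n+1)$ with $w(n)$ as competitor gives the monotonicity $\phi(w(n+1))+\tfrac{1}{2h}d^2(w(n+1),w(n))\leq \phi(w(n))$. Summing telescopically yields, for every $N$,
\begin{equation*}
\phi(w(N))+\sum_{n=0}^{N-1}\frac{1}{2h}d^2(w(n+1),w(n))\leq \phi(u_0),
\end{equation*}
so the piecewise constant interpolant $u_h$ has uniformly (in $h$) bounded $\phi$-values and the discrete speeds are square-summable. By Cauchy--Schwarz this produces an a priori Hölder-type estimate
\begin{equation*}
d(u_h(s),u_h(t))\leq C\bigl(|t-s|+h\bigr)^{1/2},\qquad 0\leq s\leq t,
\end{equation*}
with $C$ depending only on $\phi(u_0)$ and the lower bound from Step 1.

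\emph{Step 3: compactness and extraction of a limit.} For each fixed $t$, the set $\{u_h(t)\}_h$ has uniformly bounded $\phi$-values and uniformly bounded diameter (by the Hölder bound applied with $s=0$), hence the third hypothesis yields relative compactness. A standard diagonal argument over a countable dense set $Q\subset[0,+\infty)$ (for instance rationals) produces a sequence $h_n\to 0$ along which $u_{h_n}(q)\to u(q)$ for every $q\in Q$. The uniform Hölder estimate transfers to the limit on $Q$, so $u$ extends uniquely to a continuous curve on $[0,+\infty)$ and the convergence $u_{h_n}(t)\to u(t)$ propagates to all $t$ by a standard three-term triangle inequality argument. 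The same Hölder estimate, together with lower semi-continuity of the length functional along weak limits, gives $u\in AC^2_{\mathrm{loc}}([0,+\infty);X)$.

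\emph{Step 4: initial condition.} From $d(u_h(t),u_0)\leq C\sqrt{t+h}$ and the continuity of $u$ one obtains $u(0+)=u_0$. This yields $u\in GMM(\Phi;u_0)$, so in particular this set is non-empty.

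\emph{Main obstacle.} The delicate point is Step 1: the hypothesis only supplies coercivity at a single pair $(h_*,x_*)$, and one must propagate it to a uniform-in-$x$ lower bound (with a controlled dependence on $d(x_*,x)$) so that every minimization subproblem in the scheme remains well posed and the constants in the energy estimate are independent of $h$. Everything downstream --- the Hölder estimate, the diagonal extraction, and the absolute continuity of $u$ --- is a routine consequence of this quadratic lower bound together with the monotonicity identity.
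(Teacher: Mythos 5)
The paper does not prove this theorem; it is stated as a recalled result with a pointer to Proposition~2.2.3 in Ambrosio--Gigli--Savar\'e, so there is no in-paper proof to compare against. Your sketch correctly outlines the De Giorgi minimizing-movements argument as developed in that reference, and you have correctly identified the one genuinely delicate ingredient: upgrading the coercivity at a single pair $(h_*,x_*)$ to a quadratic lower bound $\phi(y)\ge m-\tfrac{1}{2h_*}d^2(x_*,y)$ and then controlling $d(x_*,\cdot)$ along the iterates so that the scheme is solvable and the constants in the energy estimate are uniform.

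Two places in the sketch misallocate where the quantitative input is actually used, and are worth tightening. First, the telescoping inequality immediately gives $\phi(w(N))\le\phi(u_0)$, but to conclude that the discrete speeds are square-summable (the $\sum\tfrac{1}{2h}d^2(w(n),w(n+1))\le C$ you invoke for the H\"older bound) you need $\phi(w(N))$ bounded \emph{below}, uniformly in $N$ and $h$, which in turn requires a discrete Gronwall-type control of $d(x_*,w(N))$ over each bounded time window; as written, Step~2 asserts this before it is available, and your ``Main obstacle'' paragraph locates it in Step~1 rather than here. Second, the H\"older estimate alone yields only $C^{1/2}$-regularity of the limit curve, which is strictly weaker than $u\in AC^2_{\mathrm{loc}}$; the $AC^2_{\mathrm{loc}}$ conclusion comes from passing the $L^2_{\mathrm{loc}}$ bound on the piecewise-constant discrete metric derivative $d(w(n+1),w(n))/h$ to the limit via weak $L^2$ compactness and a lower-semicontinuity argument for the metric derivative, not from the H\"older bound or a ``length functional.'' Both points are standard and repairable along the lines you gesture at, so the overall architecture of the argument is sound.
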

We recall that the curves in $AC^2_{loc}([0,+\infty);X)$ are the absolutely continuous curves with locally square-integrable metric derivative. \\
Notice that the curve $u$ in the previous theorem is a so-called \textit{curve of maximal slope} with respect to the strong upper gradient $\abs{\partial F}$.

\subsection{Contraction semi-groups}
\label{sec:ContractionSemigroups}
In this subsection we are interested in the stronger notion of contraction semi-groups. The semi-groups are induced by minimizing movements with given initial data, they are the unique solution of an evolution variational inequality and they satisfy a contraction property. \\
We start with some definitions.
\begin{definition}
Let $\phi\colon X\rightarrow (-\infty,+\infty]$ be proper and lower semi-continuous and let $\alpha\in\mathbb R$. A function $u\in C([0,\infty),X)\cap AC_{loc}((0,\infty),X)$, which satisfies 
\begin{equation*}
u(0)\in \overline{D(\phi)},u(t)\in D(\phi)\quad \text{for every }t>0
\end{equation*}
and solves
\begin{equation}
\frac{1}{2}\frac{d}{dt}d^2(u(t),z)+\frac{\alpha}{2}d^2(u(t),z)+\phi(u(t))\le \phi(z)\quad\text{a.e. in }(0,\infty)
\label{eq:EVI}
\end{equation}
for every $z\in D(\phi)$, is called a solution to the \textit{evolution variational inequality}. 
\end{definition}
We introduce two assumptions which are involved in the main existence result.
\begin{assumption}
\label{ass:AlphaConvexity}
There exists $\alpha\in\mathbb R$ such that for every $x,y_0,y_1\in D(\phi)$ there exists a curve $\gamma\colon [0,1]\rightarrow D(\phi)$ satisfying $\gamma(0)=y_0$ and $\gamma(1)=y_1$ with
\begin{align}
\frac{1}{2h}d^2(x,\gamma(t))+&\phi(\gamma(t))\le(1-t)\left[\frac{1}{2h}d^2(x,y_0)+\phi(y_0)\right]\nonumber\\
&+t\left[\frac{1}{2h}d^2(x,y_1)+\phi(y_1)\right]-\left(\frac{1}{h}+\alpha\right)\frac{1}{2} t(1-t)d^2(y_0,y_1)
\label{eq:AssumptionH1}
\end{align}
for every $t\in[0,1]$ and $1+\alpha h>0$.
\end{assumption}
\begin{remark}\label{rmk:alphaconvexityHilbert}
The previous inequality arises through the generalization of $\alpha$-convex functions $\hat\phi\colon H\to\mathbb R$ on a Hilbert space $H$. The function $\hat\phi$ is called $\alpha$-convex iff $x\mapsto \hat\phi(x)-\tfrac{\alpha}{2}\langle x,x\rangle$ is convex on H. Obviously, all results in this subsection are true for $\alpha$-convex functions on convex sets in Hilbert spaces and $\gamma(t):=(1-t)y_0+ty_1$.\\
Notice that the limit $h\to 0$ in (\ref{eq:AssumptionH1}) gives a restriction to the curve $\gamma$ and the metric space $(X,d)$. We are particularly interested in Banach spaces where the choice $\gamma(t)=(1-t)y_0+t y_1$ does not longer work.
\end{remark}
\begin{assumption}
\label{ass:BoundedFromBelow}
There exist $x_*\in D(\phi),r_*>0$ and $m_*\in \mathbb R$ such that $\phi(y)\ge m_*$ for every $y\in X$ satisfying $d(x_*,y)\le r_*$.
\end{assumption}
We state the general existence result for contraction semi-groups (see Theorem 4.0.4 in \cite{Ambrosio2005}).
\begin{theorem}
Let $(X,d)$ be a complete metric space and let $\phi\colon X\rightarrow (-\infty,+\infty]$ be a proper and lower semi-continuous function which satisfies Assumption \ref{ass:AlphaConvexity} and Assumption \ref{ass:BoundedFromBelow}. Let $\alpha\in\mathbb R$ be as in Assumption \ref{ass:AlphaConvexity}. Then, the following holds.
\begin{enumerate}
	\item {\normalfont Convergence and exponential formula:} for each $u_0\in\overline{D(\phi)}$ there exists a unique element $u=S[u_0]$ in $MM(\Phi;u_0)$ which therefore can be expressed through the exponential formula
	\begin{equation*}
	u(t)=S[u_0](t)=\lim_{n\to\infty}(J_{t/n})^n[u_0].
	\label{eq:}
	\end{equation*}
	$J_h x$ denotes the unique global minimizer element of $X\ni y\mapsto \Phi(h,x;y)$.
	\item {\normalfont Regularizing effect:} $u$ is a locally Lipschitz curve of maximal slope with $u(t)\in D(\abs{\partial\phi})\subset D(\phi)$ for $t>0$.
	\item {\normalfont Uniqueness and evolution variational inequalities:} $u$ is the unique solution to (\ref{eq:EVI}) among all the locally absolutely continuous curves such that $\lim_{t\downarrow 0}u(t)=u_0$.
	\item {\normalfont Contraction semigroup:} the map $t\mapsto S[u_0](t)$ is an $\alpha$-contracting semi-group i.e.
	\begin{equation*}
	d(S[u_0](t),S[v_0](t))\le e^{-\alpha t}d(u_0,v_0)\quad \text{for all } u_0,v_0\in \overline{D(\phi)}
	\end{equation*}
	\item {\normalfont Optimal a priori estimate:} if $u_0\in D(\phi)$ and $\alpha=0$, then
	\begin{equation*}
	d^2\left(S[u_0](t),(J_{t/n})^n[u_0]\right)\le \frac{t}{n}\left(\phi(u_0)-\phi_{t/n}(u_0)\right)\le\frac{t^2}{2n^2}\abs{\partial\phi}^2\!(u_0),
	\end{equation*}
	where $\phi_h(x):=\inf_{y\in X}\Phi(h,x;y)$.
\end{enumerate}
\label{thm:ExistenceSemigroups}
\end{theorem}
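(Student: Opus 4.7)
The plan is to set up the implicit Euler scheme and exploit Assumption~\ref{ass:AlphaConvexity} at every level, from the resolvent step up through the passage to the limit. First I would verify that for every $h>0$ with $1+\alpha h>0$ the resolvent $J_hx$ is well defined and single-valued on $\overline{D(\phi)}$: Assumption~\ref{ass:BoundedFromBelow} combined with \eqref{eq:AssumptionH1} applied to a curve joining a point in the ball $\{y: d(x_*,y)\le r_*\}$ to an arbitrary $y$ gives a global lower bound of the form $\phi(y)\ge m_*-C(1+d^2(x_*,y))$, so $y\mapsto \Phi(h,x;y)$ is coercive and admits a minimizer. Uniqueness follows from \eqref{eq:AssumptionH1} at $t=\tfrac{1}{2}$ applied to two candidate minimizers $y_0,y_1$, since the resulting inequality forces $d(y_0,y_1)=0$ whenever $\tfrac{1}{h}+\alpha>0$.

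The engine of the proof is a \emph{discrete evolution variational inequality}. For $z\in D(\phi)$, applying Assumption~\ref{ass:AlphaConvexity} with $y_0=J_hx$, $y_1=z$, using the minimality $\Phi(h,x;\gamma(t))\ge \Phi(h,x;J_hx)$, and dividing by $t$ before letting $t\downarrow 0$, one arrives at
\begin{equation*}
\frac{1}{2h}\bigl[d^2(J_hx,z)-d^2(x,z)\bigr]+\frac{\alpha}{2}d^2(J_hx,z)+\phi(J_hx)\le \phi(z),
\end{equation*}
which is the discrete analogue of \eqref{eq:EVI}. Iterating this inequality with $z$ replaced successively by $J_h^k v_0$ along the Euler grid yields the contraction $d(J_h^n u_0,J_h^n v_0)\le (1+\alpha h)^{-n}d(u_0,v_0)$, exponentiating in the limit to item~(4); choosing $z=u_0$ produces the discrete energy identity that controls the piecewise-constant interpolants and gives $\phi_h(u_0)\uparrow\phi(u_0)$ together with the classical slope identification $\abs{\partial\phi}^2(u_0)=\lim_{h\downarrow 0}\tfrac{2}{h}(\phi(u_0)-\phi_h(u_0))$ under $\alpha$-convexity, already delivering item~(5).

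The remaining task is the passage to $h\downarrow 0$. The standard route is to compare the Euler iterates for step sizes $h$ and $h/2$ by feeding one scheme as the test point $z$ into the discrete EVI of the other and telescoping; this yields a Cauchy estimate of order $h^{1/2}$ for the interpolants $u_h(t)$ on compact time intervals. The limit $u(t)$ is then locally absolutely continuous by the energy identity, lower semi-continuity allows the passage to the limit in the discrete EVI to recover \eqref{eq:EVI}, and uniqueness (a direct consequence of the contraction applied to two solutions with common initial datum) identifies this limit with the exponential formula in item~(1); the regularizing effect in item~(2) then follows from the decay of $\abs{\partial\phi}$ along the flow encoded in the discrete EVI. \textbf{The main obstacle} is precisely this Cauchy-in-$h$ estimate: one has to juggle quadratic distance terms from two different time grids without losing the factor $(1+\alpha h)^{-1}$ that produces contraction, which is the reason the quantitative form of Assumption~\ref{ass:AlphaConvexity} (rather than mere convexity of $\phi$ along $\gamma$) is required.
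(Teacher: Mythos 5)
The paper does not prove this theorem at all; it is quoted verbatim as background (``see Theorem 4.0.4 in \cite{Ambrosio2005}''), so there is no ``paper's own proof'' to compare against. Your sketch is a faithful reproduction of the Ambrosio--Gigli--Savar\'e strategy used in that reference: existence and uniqueness of the resolvent $J_h$ via coercivity and the $t=\tfrac12$ convexity estimate, the discrete evolution variational inequality as the workhorse, contraction by iterating it, a Crandall--Liggett-type comparison of grids for the Cauchy estimate, and passage to the limit by lower semicontinuity. Two small points worth tightening if this were to be written out: (i) the discrete EVI you record has dropped the term $\frac{1}{2h}d^2(x,J_hx)$ on the left-hand side, which does survive the $t\downarrow 0$ limit and is exactly what produces the energy identity and the optimal a~priori estimate of item~(5), so it should be kept; (ii) the assertion that iterating the discrete EVI directly yields $d(J_h^nu_0,J_h^nv_0)\le(1+\alpha h)^{-n}d(u_0,v_0)$ is true in spirit but hides the genuine difficulty that in a general $\alpha$-convex metric space the one-step contraction is not immediate from the discrete EVI alone --- this and the grid-comparison Cauchy estimate are where most of the technical work in \cite{Ambrosio2005} is concentrated, as you correctly flag at the end.
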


\section{Laplacian Eigenvalues}
\label{sec:RobinEigenvalues}
We recall some basic properties of the Laplacian eigenvalues with Dirichlet and Robin boundary conditions.
We fix an open, bounded set $\Omega\subset \mathbb R^n$ with Lipschitz boundary and $n\in\mathbb N_{\ge 2}$.
A number $\lambda_D\in \mathbb R$ is called an eigenvalue of the Laplace operator with Dirichlet boundary conditions if there exists a non-trivial weak solution $u\in W^{1,2}(\Omega)$ to the problem
\begin{align}
\begin{cases}
\Delta u+\lambda_D u&=0 \quad\text{in }\Omega\\
u&=0\quad\text{on }\partial\Omega.
\end{cases}
\end{align}
The weak formulation of this problem is given by
\begin{equation}
\vint{\nabla u \nabla \phi}=\lambda_D\vint{u\phi}\quad \text{for all }\phi\in W^{1,2}_0(\Omega).
\label{eq:weakformulationRobinEV}
\end{equation}

A number $\lambda_R\in \mathbb R$ is called an eigenvalue of the Laplace operator with Robin boundary conditions and parameter $\beta>0$ if there exists a non-trivial weak solution $u\in W^{1,2}(\Omega)$ to the problem
\begin{align}
\begin{cases}
\Delta u+\lambda_R u&=0\quad\text{in }\Omega\\
\partial_\nu u+\beta u&=0\quad\text{on }\partial\Omega,
\end{cases}
\end{align}
where $\partial_\nu$ denotes the outer normal derivative on $\partial\Omega$. The weak formulation of this problem is given by
\begin{equation}
\vint{\nabla u \nabla \phi}+\beta\bint{u\phi}=\lambda_R\vint{u\phi}\quad \text{for all }\phi\in W^{1,2}(\Omega).
\label{eq:weakformulationRobinEV}
\end{equation}
Sometimes we do not want to distinguish between the boundary conditions and denote the eigenvalue with omitted indices $\lambda$. Furthermore, we write $\lambda=\lambda(\Omega)$ if we want to emphasize the dependence on the domain $\Omega$. We recall some more properties of the eigenvalues.
It is well known that the eigenvalue problems admit a discrete, diverging spectrum
\begin{equation}
0< \lambda_1\le\lambda_2\le\lambda_3\le \cdots\le \lambda_k\le\cdots\longrightarrow +\infty,
\label{eq:discretespectrum}
\end{equation}
which are given by the variational problems
\begin{align}
\lambda_{D,k}&=\min_{\substack{V\in \mathcal S_k\\V\subset W^{1,2}_0(\Omega)}}\max_{v\in V,v\neq 0}\frac{\vint{|\nabla v|^2}}{\vint{v^2}},\\
\lambda_{R,k}&=\min_{V\in \mathcal S_k}\max_{v\in V,v\neq 0}\frac{\vint{|\nabla v|^2}+\beta\bint{v^2}}{\vint{v^2}}, \label{eq:variationalEVproblem}
\end{align}
where $\mathcal S_k$ denotes the family of $k$-dimensional subspaces of $W^{1,2}(\Omega)$. We are particularly interested in the first eigenvalue $\lambda_1$ which is simple if $\Omega$ is connected. In the following we write $\lambda:=\lambda_1$ and do not always emphasis that we mean the first eigenvalue.\\
The Dirichlet boundary conditions are the simpler boundary conditions and are much better understood. They can be seen as the limit $\beta\to +\infty$ of the Robin boundary conditions. We collect several differences of the boundary conditions.\smallskip\\
\begin{tabular}{l|c|c}
 & $\lambda_D$ & $\lambda_R$\\
\hline
increasing w.r.t. set inclusion & yes & no \\
\hline
homogeneity & $\lambda_D(r\Omega)=\lambda_D(\Omega)/r^2$ & no, but $\lambda_R(r\Omega)\le\lambda_R(\Omega)/r,\,r\ge 1$\\
\hline
log-concave eigenfunctions & yes & no
\end{tabular}
\smallskip

If we restrict the class of domains to those with a given measure, the shape optimization problem
\begin{equation}
\min\{\lambda(\Omega)\,|\,\partial\Omega\text{ is Lipschitz}, |\Omega|=M\}
\label{eq:}
\end{equation}
arises. Its minimum is attained by every Ball of measure $M>0$ since the Krahn-Faber inequality
\begin{equation}
\lambda(B)\le\lambda(\Omega) \quad\text{for every Ball } B \text{ and Lipschitz domain }\Omega \text{ with }|\Omega|=|B|,
\label{eq:}
\end{equation}
holds (see \cite{Kr1925,Fa1923} for Dirichlet boundary conditions and \cite{Daners2006} for Robin boundary conditions).
\begin{remark}\label{rmk:negativebeta}
The consideration of (generalized) minimizing movements for the first Robin eigenvalue with $\beta<0$ is problematic because a fixed Lipschitz domain $\Omega$ can be approximated by sets $\Omega_\varepsilon$ for $\varepsilon>0$ such that
\begin{equation*}
d_H(\Omega,\Omega_\varepsilon)<\varepsilon\quad\text{and}\quad \abs{\partial\Omega}\to +\infty\quad \text{as }\varepsilon\to 0,
\end{equation*}
where $d_H$ denotes the Hausdorff metric for open sets.
This implies that the implicit Euler scheme is not well-defined, i.e. 
\begin{equation*}
\inf_{\hat\Omega}\Phi(h,\Omega;\hat\Omega)\le \lambda(\Omega_\varepsilon)\le \alpha \frac{\abs{\partial\Omega_\varepsilon}}{\abs{\Omega_\varepsilon}}\to -\infty.
\end{equation*}
\end{remark}

\section{Generalized Minimizing Movements for Laplacian eigenvalues}
\label{sec:RobinEigenvalueAndGMM}
The aim of this section is to apply Theorem \ref{thm:ExistenceGMM} to both eigenvalues on the class of domains which satisfy the so-called uniform $\varepsilon$-cone property. We recall D\'efinition 2.4.1 in \cite{HePi2005}.
Let $y,\xi\in\mathbb R^n$ with $\abs{\xi}=1$ and $\varepsilon>0$. By $C(y,\xi,\varepsilon)$ we denote the cone with vertex at $y$, defined by
\begin{equation*}
C(y,\xi,\varepsilon):=\{ z\in\mathbb R^n\,|\,\langle z-y,\xi\rangle\ge \operatorname{cos}(\varepsilon)\abs{z-y} \text{ and } 0<\abs{z-y}<\varepsilon\}.
\end{equation*}
We say that an open set $\Omega\in\mathbb R^n$ satisfies the \textit{$\varepsilon$-cone condition} if for every $x\in \partial\Omega$ there exists a unit vector $\xi_x$ such that for every $y\in \overline\Omega\cap B_\varepsilon(x)$ the cone $C(y,\xi_x,\varepsilon)$ is contained in $\Omega$. $B_\varepsilon(x)$ denotes the open ball with radius $\varepsilon$, centered at $x$.\\
We define the class
\begin{equation*}
\mathcal O_\varepsilon:=\{\Omega\subset D\,|\,\Omega\text{ is open and satisfies the $\varepsilon$-cone condition}\}
\end{equation*}
for an open ball $D\subset\mathbb R^n$ and a constant $\varepsilon>0$ such that the class is non-empty. The domains in this class have Lipschitz boundary and finite perimeter. On the other hand every open bounded Lipschitz domain with finite perimeter satisfies the $\varepsilon$-cone property for some $\varepsilon>0$. We will also consider domains of given measure and introduce
\begin{equation*}
\mathcal O_{\varepsilon,M}:=\{\Omega\subset D\,|\,\Omega \text{ is open, satisfies the $\varepsilon$-cone condition and } \abs{\Omega}=M \}
\end{equation*}
for an open ball $D\subset\mathbb R^n$ and constants $\varepsilon, M>0$ such that the class is non-empty. \\
We recall the definition of different metrics on these classes. The \textit{Hausdorff metric for compact sets} is given by
\begin{align*}
d^H( A, B)&:= \operatorname{inf}\{\varepsilon \ge 0\,|\,\overline A\subset \overline B_\varepsilon \text{ and }\overline B\subset \overline A_\varepsilon\},\quad \text{where}\\
A_\varepsilon &:=\bigcup_{a\in A}\{z\in \mathbb R^n\,;\,\abs{z-a}\le\varepsilon\},
\end{align*}
the \textit{Hausdorff metric for open sets} is given by
\begin{align*}
d_H( A, B):= d^H(D\backslash \overline A, D\backslash \overline B)
\end{align*}
and a third metric is given by
\begin{equation*}
d_{char}(A,B):=\norm{\chi_{A}-\chi_B}_{L^1(D)},
\end{equation*}
where $\chi_A$ denotes the characteristic function of a measurable set $A$. The classes $\mathcal O_\varepsilon$ and $\mathcal O_{\varepsilon, M}$ together with one of these metrics are complete metric spaces.\\
We state the main result of this section.
\begin{theorem}
The first Laplacian eigenvalue $\lambda\colon \mathcal O_{\varepsilon}\to\mathbb R,\,\Omega\mapsto\lambda(\Omega)$ with Dirichlet or Robin boundary conditions together with one of the metrics $d^H, d_H$ or $d_{char}$ satisfy the assumptions in Theorem \ref{thm:ExistenceGMM}.
\label{thm:ExistenceRobinGMM}
\end{theorem}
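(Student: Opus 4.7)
The plan is to verify, for each of the three metrics $d\in\{d^H,d_H,d_{char}\}$, the three bullet points of Theorem \ref{thm:ExistenceGMM} applied to $\phi=\lambda$ on $(\mathcal O_\varepsilon,d)$. The coercivity bullet is essentially free: since $\lambda_D(\Omega)>0$ and, for $\beta>0$, $\lambda_R(\Omega)>0$ for every admissible $\Omega$, one has
\[
\Phi(h,x_*;y)=\lambda(y)+\frac{1}{2h}d^2(x_*,y)\ge 0
\]
for every $h>0$ and every $x_*,y\in\mathcal O_\varepsilon$, so any choice of $x_*\in\mathcal O_\varepsilon$ and $h_*>0$ works. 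The compactness bullet also reduces to a known fact: since every $\Omega\in\mathcal O_\varepsilon$ sits inside the fixed ball $D$, the diameter of $\mathcal O_\varepsilon$ in each of the three metrics is finite, so the hypothesis $\sup_{n,m}d(x_n,x_m)<\infty$ is automatic and I only need sequential compactness of $\mathcal O_\varepsilon$ itself. This is Chenais' theorem (Théorème 2.4.10 in \cite{HePi2005}): any sequence in $\mathcal O_\varepsilon$ admits a subsequence that converges simultaneously with respect to $d^H$, $d_H$, and $d_{char}$ to a limit in $\mathcal O_\varepsilon$, and in fact these three metrics induce the same topology on $\mathcal O_\varepsilon$. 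In particular I do not even need the hypothesis $\sup_n\phi(x_n)<+\infty$ to extract a convergent subsequence.

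The remaining and only substantial bullet is lower semi-continuity of $\lambda$. Because the three metrics induce the same topology on $\mathcal O_\varepsilon$, it suffices to prove sequential lsc for one of them, say $d_H$. For Dirichlet boundary conditions this is the classical stability result of Chenais, obtained via Mosco convergence of the spaces $W^{1,2}_0(\Omega_n)$ under a uniform cone condition (see, e.g., \cite{HePi2005}, Theorem 2.3.17), which passes to the variational characterization of $\lambda_{D,1}$. For Robin boundary conditions one has to argue more carefully, since the Rayleigh quotient now contains the surface term $\beta\bint{u^2}$. The key ingredients, all available under the uniform $\varepsilon$-cone property, are (i) a uniform family of extension operators $E_n\colon W^{1,2}(\Omega_n)\to W^{1,2}(\mathbb R^n)$ with uniformly bounded operator norms, giving Rellich-type compactness of extended eigenfunctions in $L^2(D)$, and (ii) uniform convergence of the surface measures $\mathcal H^{n-1}\lfloor\partial\Omega_n$ to $\mathcal H^{n-1}\lfloor\partial\Omega$, which is standard under the uniform cone property. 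Combining these with the min-max formula (\ref{eq:variationalEVproblem}) and choosing as test subspace the linear span of (extended) eigenfunctions of the limit domain gives $\limsup_n\lambda_R(\Omega_n)\le\lambda_R(\Omega)$; the converse inequality $\liminf_n\lambda_R(\Omega_n)\ge\lambda_R(\Omega)$ follows by passing to the limit in the Rayleigh quotient of a sequence of normalized eigenfunctions, using the extension operators to obtain a weakly $W^{1,2}$-convergent subsequence and lsc of both the Dirichlet integral and the boundary integral under the surface measure convergence.

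I expect the main obstacle to be the Robin part of lsc, specifically the passage to the limit in the boundary term; this is the reason the uniform $\varepsilon$-cone property is imposed. Once this is in place, all three hypotheses of Theorem \ref{thm:ExistenceGMM} hold simultaneously for $d^H$, $d_H$ and $d_{char}$, and the conclusion of Theorem \ref{thm:ExistenceRobinGMM} follows. Note that the argument does not use a volume constraint, so in particular it applies verbatim on $\mathcal O_{\varepsilon,M}$ provided the volume constraint is preserved by the relevant convergence, which is immediate for $d_{char}$ and follows from the uniform cone property for $d^H$ and $d_H$.
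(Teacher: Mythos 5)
Your proposal is correct and follows essentially the same route as the paper: verify each of the three hypotheses of Theorem~\ref{thm:ExistenceGMM}, with nonnegativity of $\lambda$ giving the coercivity bullet for free, Th\'eor\`eme~2.4.10 of \cite{HePi2005} giving compactness, and continuity of the eigenvalue (citing \cite{Deipenbrock2015} for Robin) giving lower semi-continuity. Two small remarks: your observation that $\mathcal O_\varepsilon$ is already compact in all three metrics, making the sub-level-set condition of Theorem~\ref{thm:ExistenceGMM} trivially satisfied, is a cleaner formulation than the paper's appeal to compactness of sub-level sets; and you should note in passing that this compactness also yields completeness of $(\mathcal O_\varepsilon,d)$, which Theorem~\ref{thm:ExistenceGMM} requires and which the paper states explicitly at the start of its proof.
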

\begin{proof}
First of all, the class $\mathcal O_\varepsilon$ is complete with each of the metrics and the eigenvalue is continuous with respect to these metric spaces. We have $\Phi(h_*,x_*;y)\ge 0$ for every $h_*>0,x_*,y_*\in \mathcal O_\varepsilon$.
The compactness of the sub-level sets $\{\Omega\subset \mathcal O_\varepsilon\,|\,\lambda_R(\Omega)\le C\}$ in $(\mathcal O_\varepsilon,d_{char})$ can be proven by similar arguments as in Section 2 of \cite{Deipenbrock2015}. The same holds true for $\lambda_D$. The compactness with respect to the other metrics follows from Th\'eor\`em 2.4.10 in \cite{HePi2005}.
\end{proof}
\begin{corollary}
The first Laplacian eigenvalue $\lambda\colon \mathcal O_{\varepsilon,M}\to\mathbb R,\,\Omega\mapsto\lambda(\Omega)$ with Dirichlet or Robin boundary conditions together with one of the metrics $d^H, d_H$ or $d_{char}$ satisfy the assumptions in Theorem \ref{thm:ExistenceGMM}.
\end{corollary}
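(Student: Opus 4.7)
The plan is to deduce the corollary from Theorem \ref{thm:ExistenceRobinGMM} by showing that $\mathcal O_{\varepsilon,M}$ is simply a closed subspace of $\mathcal O_\varepsilon$ with respect to each of the three metrics, so that every hypothesis verified in the proof of Theorem \ref{thm:ExistenceRobinGMM} restricts to it.

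First I would verify that $\Omega\mapsto\abs{\Omega}$ is continuous on $(\mathcal O_\varepsilon,d)$ for $d\in\{d^H,d_H,d_{char}\}$. For $d_{char}$ this is immediate, since $\abs{\Omega_n}-\abs{\Omega}=\int_D(\chi_{\Omega_n}-\chi_\Omega)\,dx$ is bounded by $d_{char}(\Omega_n,\Omega)$. For $d^H$ and $d_H$ this uses the uniform $\varepsilon$-cone property: by Th\'eor\`eme 2.4.10 in \cite{HePi2005}, Hausdorff convergence of sequences in $\mathcal O_\varepsilon$ implies $L^1$-convergence of the characteristic functions, and hence convergence of the Lebesgue measure. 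Consequently $\mathcal O_{\varepsilon,M}=\{\Omega\in\mathcal O_\varepsilon\,|\,\abs{\Omega}=M\}$ is a closed subset of the complete metric space $(\mathcal O_\varepsilon,d)$, and is therefore itself a complete metric space with the induced metric.

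Next I would transfer the three hypotheses of Theorem \ref{thm:ExistenceGMM}. Lower semi-continuity of $\lambda$ on $(\mathcal O_{\varepsilon,M},d)$ follows from lower semi-continuity on $(\mathcal O_\varepsilon,d)$ by restriction. For the lower bound on $\Phi$, note that $\lambda>0$ and $d\ge 0$ on $\mathcal O_\varepsilon$, so $\Phi(h_*,x_*;y)\ge 0$ for every $h_*>0$ and $x_*,y\in\mathcal O_{\varepsilon,M}$. For the compactness condition, any sequence $\{\Omega_n\}\subset\mathcal O_{\varepsilon,M}$ with $\sup_n\lambda(\Omega_n)<+\infty$ and $\sup_{n,m}d(\Omega_n,\Omega_m)<+\infty$ sits inside a sub-level set of $\lambda$ in $\mathcal O_\varepsilon$, which is compact in $(\mathcal O_\varepsilon,d)$ by the argument used in the proof of Theorem \ref{thm:ExistenceRobinGMM}. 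A convergent subsequence $\Omega_{n_k}\to\Omega$ in $\mathcal O_\varepsilon$ satisfies $\abs{\Omega}=M$ by the continuity of the measure established above, so $\Omega\in\mathcal O_{\varepsilon,M}$, and the sub-level sets are compact in the subspace as well.

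The only genuinely non-routine point is the continuity of the Lebesgue measure under Hausdorff convergence, and for that I would simply cite the statement from \cite{HePi2005} that exploits the uniform $\varepsilon$-cone property to rule out measure loss through thin boundary layers; everything else is a direct restriction argument from the already-proven Theorem \ref{thm:ExistenceRobinGMM}.
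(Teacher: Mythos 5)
Your proposal is correct and follows essentially the same route as the paper: the paper's two-line proof says to argue as in Theorem \ref{thm:ExistenceRobinGMM} and use continuity of the Lebesgue measure, which is exactly what you do — you establish continuity of $\Omega\mapsto\abs{\Omega}$ in each of the three metrics, deduce that $\mathcal O_{\varepsilon,M}$ is a closed (hence complete) subspace of $\mathcal O_\varepsilon$, and then restrict the three hypotheses of Theorem \ref{thm:ExistenceGMM}. Your write-up simply makes the paper's terse argument explicit.
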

\begin{proof}
The proof works similar to the proof of Theorem \ref{thm:ExistenceRobinGMM}. We use the fact, that the Lebesque measure $\mathcal O_{\varepsilon}\to\mathbb R^+,\,\Omega\to \abs{\Omega}$ is continuous.
\end{proof}

\section{Convex bodies and the Brunn-Minkowski inequality}
\label{sec:ConvexbodiesandtheBrunn-Minkowskiinequality}
In this subsection we will use the Brunn-Minkowski inequality for the first Dirichlet eigenvalue to prove $\alpha$-convexity (with $\alpha=0$). 
First, we recall the definition of the class of \textit{convex bodies}
\begin{equation}
\mathcal K:=\{K\subset\mathbb R^n|K \text{ is a non-empty, compact, convex set}\}.
\label{eq:}
\end{equation}
For more details about convex bodies see \cite{Schneider1993}. 
To define a metric on this set, we recall that the \textit{support function} of a convex body is given by
\begin{equation}
\rho_K(x):=\sup\{\langle y,x\rangle|y\in K\}\quad \text{for }x\in\mathbb R^n.
\label{eq:}
\end{equation}
Notice, that the support function is sub-linear and that every sub-linear function from $\mathbb R^n$ to $\mathbb R$ is the support function of a unique convex body. Furthermore, every sub-linear function is uniquely determined by its restriction onto $\mathbb S^{n-1}$. We use these properties to define a metric on $\mathcal K$ by
\begin{equation}
d(K_1,K_2):=\norm{\rho_{K_1}-\rho_{K_2}}_{\mathcal R},
\label{eq:}
\end{equation}
where $\norm{\cdot}_{\mathcal R}$ is a norm on $L^\infty(\mathbb S^{n-1},\mathbb R)$.
The metric defined by $\norm{\cdot}_{\mathcal R}=\norm{\cdot}_{L^p(\mathbb S^{n-1},\mathbb R)},p\in[1,+\infty]$ is called the \textit{$L^p$-metric for convex bodies} and is denoted by $d_{L^p}$. All $L^p$-metrics for convex bodies induce the same topology (see \cite{Vitale1985}) and the $L^\infty$-metric is given by the Hausdorff metric for compact sets. In the following we will use the $L^2$-metric because this allows us to work in the Hilbert space $L^2(\mathbb S^{n-1},\mathbb R)$ and simplifies the $\alpha$-convexity condition in Theorem \ref{thm:ExistenceSemigroups}.\\
To be precise we define the first Dirichlet eigenvalue on a convex body $K\in\mathcal K$ by
\begin{equation*}
\lambda_D(K):=
\begin{cases}
\lambda_D(\operatorname{int}K) &\quad \text{if } \operatorname{int}K\neq\emptyset\\
+\infty&\quad\text{else}.
\end{cases}
\label{eq:}
\end{equation*}
The Brunn-Minkowski inequality for the Dirichlet eigenvalue is given by
\begin{theorem}\label{thm:BMI}
Let $K_0,K_1\in\mathcal K$ and $t\in[0,1]$. Then,
\begin{equation*}
\lambda_D((1-t)K_0\oplus tK_1)^{-1/2}\ge (1-t)\lambda_D(K_0)^{-1/2}+t\lambda_D(K_1)^{-1/2},
\end{equation*}
where $(1-t)K_0\oplus t K_1:=\{(1-t)y_0 + t y_1|y_0\in K_0,y_1\in K_1\}$ denotes the Minkowski sum.
\end{theorem}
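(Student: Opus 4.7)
The plan is to prove the classical Brunn-Minkowski inequality of Brascamp and Lieb for the first Dirichlet eigenvalue by combining its variational characterization with a Pr\'ekopa-Leindler type inequality, using as key input the log-concavity of first Dirichlet eigenfunctions on convex domains.

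First I would reduce to a normalized situation. If $\operatorname{int} K_0 = \emptyset$ or $\operatorname{int} K_1 = \emptyset$, one side of the inequality is zero and nothing is to show, so assume both interiors are non-empty. Using $\lambda_D(cK) = c^{-2}\lambda_D(K)$, set $\tilde K_i := \sqrt{\lambda_D(K_i)}\,K_i$, so that $\lambda_D(\tilde K_i) = 1$, and write
\begin{equation*}
(1-t)K_0 \oplus t K_1 \,=\, c\,\bigl(s\,\tilde K_0 \oplus (1-s)\,\tilde K_1\bigr),
\end{equation*}
with $c = (1-t)\lambda_D(K_0)^{-1/2} + t\lambda_D(K_1)^{-1/2}$ and $s = (1-t)\lambda_D(K_0)^{-1/2}/c$. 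By homogeneity, the claim reduces to showing $\lambda_D(s\tilde K_0 \oplus (1-s)\tilde K_1) \le 1$ whenever $\lambda_D(\tilde K_0) = \lambda_D(\tilde K_1) = 1$, for every $s\in[0,1]$.

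Next, let $u_i > 0$ be the first Dirichlet eigenfunction on $\operatorname{int}(\tilde K_i)$ normalized by $\int u_i^2 = 1$, extended by zero outside. By the log-concavity theorem of Brascamp-Lieb, $\log u_i$ is concave on its support. I would introduce the sup-convolution test function
\begin{equation*}
u_s(z) := \sup\bigl\{ u_0(x)^s\, u_1(y)^{1-s} \,:\, z = sx + (1-s)y,\ x \in \tilde K_0,\ y \in \tilde K_1\bigr\},
\end{equation*}
which is log-concave, supported on $s\tilde K_0 \oplus (1-s)\tilde K_1$, vanishes on its boundary, and belongs to $W^{1,2}_0$ of its interior. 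The Pr\'ekopa-Leindler inequality applied to the triple $(u_0^2, u_1^2, u_s^2)$ yields $\int u_s^2 \ge 1$; combined with the analogous gradient bound $\int|\nabla u_s|^2 \le 1$, the Rayleigh quotient of $u_s$ is at most $1$, and the reduced claim $\lambda_D(s\tilde K_0 \oplus (1-s)\tilde K_1) \le 1$ follows.

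I expect the main obstacle to be the gradient bound $\int|\nabla u_s|^2 \le 1$. Unlike the $L^2$ bound, it does not follow from a naive application of Pr\'ekopa-Leindler: $u_s$ is specified only implicitly, and controlling $\nabla u_s(z)$ pointwise requires an envelope argument at the maximizing pair $(x^*, y^*)$. After smoothing $u_0, u_1$ to strongly log-concave approximants and applying the Lagrange conditions, one obtains the pointwise identity $|\nabla u_s(z)|^2 = |\nabla u_0(x^*)|^{2s}|\nabla u_1(y^*)|^{2(1-s)}$; integrating this via Pr\'ekopa-Leindler (or a Brunn-Minkowski estimate on the level sets $\{u_s \ge \tau\} \supseteq s\{u_0 \ge \tau\} + (1-s)\{u_1 \ge \tau\}$) and passing to the limit completes the argument. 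An alternative that avoids this technicality is the heat-semigroup route via Feynman-Kac and a Borell-type inequality
\begin{equation*}
p^{\tilde K_s}_{t_s}(z,z) \,\ge\, p^{\tilde K_0}_{t_0}(x,x)^{s}\, p^{\tilde K_1}_{t_1}(y,y)^{1-s}, \qquad \sqrt{t_s} = s\sqrt{t_0} + (1-s)\sqrt{t_1},
\end{equation*}
for the Dirichlet heat kernel on convex bodies, from which the eigenvalue inequality follows by taking $-t_s^{-1}\log$ and sending $t_s\to\infty$.
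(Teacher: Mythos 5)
The paper does not actually prove Theorem \ref{thm:BMI}; it is quoted as a classical result (originally due to Brascamp--Lieb, surveyed in the cited Colesanti paper), and the paper only remarks in passing on the structure of the known proof (prove the convexity inequality $(\ref{thm:weakBMIDirichlet})$ using log-concavity, then scale). Your proposal is therefore not being compared against an in-paper argument, and I assess it on its own terms.

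Your normalization, your use of Brascamp--Lieb log-concavity, and your sup-convolution test function $u_s$ are all the right ingredients, and the $L^2$ bound $\int u_s^2\ge 1$ does follow from Pr\'ekopa--Leindler exactly as you say. However, there is a genuine gap at the gradient bound $\int|\nabla u_s|^2\le 1$, and the two repairs you sketch do not close it. The pointwise identity $|\nabla u_s(z)|^2=|\nabla u_0(x^*)|^{2s}|\nabla u_1(y^*)|^{2(1-s)}$ at the maximizing pair is correct, but it holds only along the optimal decomposition and not for all splittings $z=sx+(1-s)y$, so it does not set up a Pr\'ekopa--Leindler application (which would in any case give $\ge$, the wrong direction); a change of variables in $p=\nabla\log u_0(x^*)$ produces the Jacobian $\det\bigl(s(-D^2\log u_0)^{-1}+(1-s)(-D^2\log u_1)^{-1}\bigr)$, and the determinant inequality for convex combinations runs the wrong way for an upper bound. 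The level-set inclusion $\{u_s\ge\tau\}\supseteq s\{u_0\ge\tau\}+(1-s)\{u_1\ge\tau\}$ controls volumes, hence $\int u_s^2$, but says nothing about gradients. The standard fix avoids the gradient bound entirely: with $f_i=\log u_i$ one has $\Delta f_i+|\nabla f_i|^2=-\lambda_i$, and at the optimal pair the Hessian of the sup-convolution is the matrix harmonic mean, so $D^2 f_s\succeq s\,D^2 f_0+(1-s)\,D^2 f_1$ while $\nabla f_s=\nabla f_0(x^*)=\nabla f_1(y^*)$; this shows $\Delta u_s+(s\lambda_0+(1-s)\lambda_1)u_s\ge 0$ in a viscosity sense, and then $\int|\nabla u_s|^2\le(s\lambda_0+(1-s)\lambda_1)\int u_s^2$ after integration by parts (or a comparison-principle argument), which yields the convexity inequality $(\ref{thm:weakBMIDirichlet})$ directly and the full statement by scaling. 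Your heat-kernel alternative, on the other hand, is the original Brascamp--Lieb route and is correct as stated; with the parabolic scaling $\sqrt{t_s}=s\sqrt{t_0}+(1-s)\sqrt{t_1}$ and $t_i\propto\lambda(K_i)^{-1}$, taking $-t_s^{-1}\log$ and letting $t\to\infty$ yields the $(-1/2)$-concavity without any separate scaling step.
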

Since $z\mapsto z^{-2}$ is concave, this theorem implies
\begin{equation}
\lambda_D((1-t)K_0\oplus tK_1)\le (1-t)\lambda_D(K_0)+t\lambda_D(K_1).\label{thm:weakBMIDirichlet}
\end{equation}
Next, we prove the existence of a contraction semi-group by applying Theorem \ref{thm:ExistenceSemigroups}.
\begin{theorem}
The Dirichlet eigenvalue $\lambda_D\colon \mathcal K\to \mathbb R;\; K\mapsto\lambda_D(K)$ and the metric space $(\mathcal K,d_{L^2})$ satisfy all assumptions of Theorem \ref{thm:ExistenceSemigroups} with $\alpha=0$.
\end{theorem}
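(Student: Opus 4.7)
The goal is to verify, for $\phi = \lambda_D$ on $(\mathcal K, d_{L^2})$, the three hypotheses of Theorem \ref{thm:ExistenceSemigroups} with $\alpha = 0$: properness with lower semi-continuity, Assumption \ref{ass:AlphaConvexity}, and Assumption \ref{ass:BoundedFromBelow}. The first and third are soft; the whole geometric content is packaged into the second, where the weak Brunn-Minkowski inequality \eqref{thm:weakBMIDirichlet} interacts perfectly with the Hilbert space structure underlying $d_{L^2}$.

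Properness is immediate since every ball lies in $D(\lambda_D)$. For lower semi-continuity, recall that $d_{L^2}$ induces the Hausdorff topology on $\mathcal K$ (already quoted by the author from \cite{Vitale1985}); for convex bodies the first Dirichlet eigenvalue is continuous under Hausdorff convergence when the limit has non-empty interior, while in the degenerate case the volume of the limit vanishes and Faber-Krahn forces $\lambda_D \to +\infty$, which matches the convention $\lambda_D(K) = +\infty$ for $\operatorname{int} K = \emptyset$. For Assumption \ref{ass:BoundedFromBelow} one can take $m_* = 0$, $x_*$ any fixed ball and any $r_* > 0$, since $\lambda_D \ge 0$ on $\mathcal K$.

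The core step is Assumption \ref{ass:AlphaConvexity} with $\alpha = 0$. Given $x, y_0, y_1 \in D(\lambda_D)$, I would take the \emph{Minkowski interpolation}
\[
\gamma(t) := (1-t) y_0 \oplus t y_1, \qquad t \in [0,1],
\]
which remains in $D(\lambda_D)$ because $(1-t)\operatorname{int}(y_0) + t y_1 \subset \operatorname{int}(\gamma(t))$ for $t < 1$ and $\gamma(1) = y_1 \in D(\lambda_D)$. The decisive observation is that on the level of support functions $\rho_{\gamma(t)} = (1-t)\rho_{y_0} + t \rho_{y_1}$, so $\gamma(t)$ traces out a straight-line segment in the Hilbert space $L^2(\mathbb S^{n-1})$. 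The parallelogram identity therefore gives the sharp equality
\[
d_{L^2}^2(x,\gamma(t)) = (1-t)\, d_{L^2}^2(x, y_0) + t\, d_{L^2}^2(x, y_1) - t(1-t)\, d_{L^2}^2(y_0, y_1),
\]
and combining this (divided by $2h$) with the weak Brunn-Minkowski inequality $\lambda_D(\gamma(t)) \le (1-t) \lambda_D(y_0) + t \lambda_D(y_1)$ yields \eqref{eq:AssumptionH1} exactly, with $\alpha = 0$; the side condition $1 + \alpha h > 0$ is automatic.

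I do not foresee a genuine technical obstacle. The one conceptual point worth emphasizing, in view of Remark \ref{rmk:alphaconvexityHilbert}, is that the choice of interpolating curve is what makes everything click: embedding $\mathcal K$ into $L^2(\mathbb S^{n-1})$ via support functions turns Minkowski interpolation into straight-line interpolation, so that the Brunn-Minkowski inequality in its weak form is \emph{precisely} the statement of $0$-convexity of $\lambda_D$ along these Hilbert-space geodesics. No other choice of $\gamma$ would simultaneously reduce the distance term to the clean parallelogram identity and allow the Brunn-Minkowski inequality to control the eigenvalue term.
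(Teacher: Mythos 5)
Your proposal is correct and follows essentially the same route as the paper: properness and Assumption \ref{ass:BoundedFromBelow} are trivial since $\lambda_D \ge 0$, lower semi-continuity comes from continuity of the eigenvalue on convex sets, and Assumption \ref{ass:AlphaConvexity} with $\alpha=0$ is obtained by viewing $\mathcal K$ as a convex subset of $L^2(\mathbb S^{n-1})$ via support functions (so Minkowski interpolation becomes linear interpolation) and combining the parallelogram identity with the weak Brunn--Minkowski inequality \eqref{thm:weakBMIDirichlet}, exactly as the paper does via Remark \ref{rmk:alphaconvexityHilbert}. The only cosmetic difference is in the lower semi-continuity step, where you argue directly with Hausdorff convergence plus Faber--Krahn for the degenerate limit, whereas the paper invokes continuity of $\lambda_D$ under $\gamma$-convergence and the fact that the $d_{L^2}$-topology is stronger; both are standard and equivalent in substance.
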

\begin{proof}
$(\mathcal K,d_{L^2})$ is a complete metric space (see \cite{Vitale1985}). Obviously, the Dirichlet eigenvalue is proper and Assumption \ref{ass:BoundedFromBelow} is satisfied. We recall the well-known fact, that the domain dependent Dirichlet eigenvalue is continuous with respect to $\gamma$-convergence on convex domains. Since the topology induced by $d_{L^2}$ is stronger than the one induced by $\gamma$-convergence, we get the (lower semi-)continuity of $\lambda_D$. Assumption \ref{ass:AlphaConvexity} follows from Theorem (\ref{thm:weakBMIDirichlet}), Remark \ref{rmk:alphaconvexityHilbert} and the fact that $\mathcal K$ is a convex subset of the Hilbert space $L^2(\mathbb S^{n-1},\mathbb R)$.
\end{proof}
The Brunn-Minkowski inequality for a general domain functional $F$ is given by
\begin{equation*}
F((1-t)K_0\oplus tK_1)^{1/\sigma}\ge (1-t)F(K_0)^{1/\sigma}+tF(K_1)^{1/\sigma}
\end{equation*}
for all $K_0,K_1\in\mathcal K,t\in[0,1]$, where $F\colon \mathcal K\rightarrow[0,+\infty]$ denotes a domain functional which is homogeneous of degree $\sigma\neq 0$.
If the homogeneity is of degree $\sigma\le -1$, we get
\begin{equation}
F((1-t)K_0\oplus tK_1)\ge (1-t)F(K_0)+tF(K_1).
\end{equation}
It turns out that the inequalities are equivalent if the functional $F$ is homogenous with parameter $\sigma\le -1$.\\
Since the first Robin eigenvalue is not homogeneous, we can not expect to have the Brunn-Minkowski inequality for the Robin eigenvalue with $\sigma=-2$. Nevertheless, we formulate the following open problem.
\begin{openproblem}
Let $\lambda_R$ be the first Robin eigenvalue with $\beta>0$.
Does
\begin{equation*}
\lambda_R((1-t)K_0\oplus tK_1)\le (1-t)\lambda_R(K_0)+t\lambda_R(K_1),
\end{equation*}
hold for all $K_0,K_1\in\mathcal K$ and $t\in[0,1]$?
\end{openproblem}
First notice, that the prove of Theorem \ref{thm:BMI} is divided into two steps. The first step proves (\ref{thm:weakBMIDirichlet}). The second step uses a scaling argument to prove the Brunn-Minkwoski inequality. A key argument in the first part is the log-concavity of the eigenfunctions. Since \cite{ACH2017}, we know that the first Robin eigenfunction is not necessarily log-concave. Therefore, other techniques are required to prove or disprove the open problem.

\section{$\alpha$-convexity of the Robin eigenvalue}
\label{sec:AlphaConvexityOfTheRobinEigenvalue}
In this subsection, we will use the second domain variation of the first Robin eigenvalue to prove $\alpha$-convexity. More precisely, we give a uniform estimate for the second domain variation. 
To define the second domain variation, we consider families of perturbations $(\Omega_t)_t$ for a fixed bounded, open set $\Omega\subset \mathbb R^n$. For small $t>0$, we set 
\begin{equation}
\Omega_t:=\{y=x+tv(x)+\tfrac{t^2}{2}w(x)+o(t^2)|x\in\Omega\}
\label{eq:}
\end{equation}
where $v,w\colon \Omega\mapsto\mathbb R^n$ are sufficient smooth vector fields.
In the next step, we define
\begin{equation*}
\lambda_R[\Omega,v,w](t):=\lambda_R(\Omega_t)
\end{equation*}
and compute the first two derivatives at $t=0$
\begin{equation*}
\dot\lambda_R[\Omega,v,w](0):=\frac{\operatorname{d}}{\operatorname{d}\!t}\lambda_R(\Omega_t)\bigg\vert_{t=0} \quad \text{and}\quad \ddot\lambda_R[\Omega,v,w](0):=\frac{\operatorname{d}^2}{\operatorname{d}\! t^2}\lambda_R(\Omega_t)\bigg\vert_{t=0}.
\end{equation*}
They are called the \textit{first and second domain variation}. Bandle and Wagner \cite{Bandle2014} proved the following formula for the first variation
\begin{equation}
\dot\lambda_R[\Omega,v,w](0)=\bint{\left(|\nabla u|^2-\lambda(\Omega)u^2-2\beta^2u^2+\beta(n-1)H u^2\right)(v\cdot\nu)},
\label{eq:}
\end{equation}
where $u$ denotes the first normalized Robin eigenfunction on $\Omega$ and $H$ denotes the mean curvature of $\partial\Omega$. 
Observe that the computation of the second domain variation in \cite{Bandle2014} assumes that $\Omega$ is a critical point ($\dot\lambda_R[\Omega,v,w]=0$). Slight adjustments on these calculations give a similar result for general domains with an additional term which depends on $\dot\lambda_R[\Omega,v,w]$. The computations and the formula are given in \cite{Holle2018}. These computations require that the boundary of $\Omega$ is of class $C^{2,\gamma}$ and that the vector fields $v,w$ are in $C^{2,\gamma}(\Omega,\mathbb R)$ for a H\"older parameter $\gamma\in(0,1]$.\\
To obtain a uniform estimate on $\ddot\lambda_R[\Omega,v,w]$, we restrict ourselves to domains given by
\begin{equation}
\Omega(\eta):=\{r\theta|\theta\in\mathbb S^{n-1},0\le r<\eta(\theta)\}
\label{eq:}
\end{equation}
where $\eta\colon\mathbb S^{n-1}\rightarrow [R_*,+\infty)$ are suitable boundary parametrization.
\begin{definition}
Let $0<R_*\le R^*, \gamma\in(0,1]$ and $M\in (0,+\infty]$ such that $\abs{B_{R_*}}\le M$. We set
\begin{align*}
\mathcal O:=\{\Omega\subset\mathbb R^n\,|\,\Omega=\Omega(\eta) \text{, } \eta \text{ satisfies (O1)--(O2)}\}\\
d_{\mathcal O}(\Omega(\eta_1),\Omega(\eta_2)):=\left\|\eta_1-\eta_2\right\|_{W^{1,2}(\mathbb S^{n-1})}
\end{align*}
with
\begin{align}
\tag{O1} \eta\in C^{2,\gamma}(\mathbb S^{n-1},[R_*,+\infty))\quad &\text{and}\quad \norm{\eta}_{C^{2,\gamma}(\mathbb S^{n-1})}\le R^*,\\
\tag{O2} \abs{\Omega(\eta)}&\le M.
\end{align}
\end{definition}

We get the following estimate for the second domain variation.
\begin{theorem}\label{thm:upperboundeigenvalue}
The second domain variation of the first Robin eigenvalue is bounded by
\begin{equation}\label{eq:EstimateSecondVariationinTheorem}
\lvert\ddot\lambda_R[\Omega,v,w](0)\rvert\le C (\norm{v}^2_{W^{1,2}(\partial\Omega)}+\norm{w}_{L^1(\partial\Omega)}),
\end{equation}
where $C>0$ is independent of the domain $\Omega\in\mathcal O$ and the vector fields $v, w\in C^{2,\gamma}(\Omega,\mathbb R^n)$.
\end{theorem}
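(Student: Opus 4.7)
The plan is to start from the explicit formula for $\ddot\lambda_R[\Omega,v,w](0)$ derived in \cite{Holle2018}, which extends the Bandle--Wagner second-variation formula to non-critical domains by adding a correction term proportional to $\dot\lambda_R[\Omega,v,w](0)$. Schematically, this formula is a finite sum of boundary integrals whose integrands factor as a product of two groups of terms: intrinsic factors (the eigenvalue $\lambda_R(\Omega)$, the first normalized eigenfunction $u$ together with its derivatives up to second order, the outer unit normal $\nu$, the mean curvature $H$ and the second fundamental form), and vector-field factors that are either quadratic in $v\cdot\nu$ and its tangential gradient $\nabla_\tau(v\cdot\nu)$, or linear in $w\cdot\nu$. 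Once the intrinsic factors are controlled in $L^\infty(\partial\Omega)$ by a constant depending only on the class parameters $R_*,R^*,\gamma,M,\beta$, the estimate (\ref{eq:EstimateSecondVariationinTheorem}) reduces to Cauchy--Schwarz, after one uses the $C^{1,\gamma}$-control on $\nu$ to replace $v\cdot\nu$ and $\nabla_\tau(v\cdot\nu)$ by $v$ and its tangential derivative.

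For the uniform control of the intrinsic factors I would combine three ingredients. First, the condition $\eta\ge R_*$ together with (O1)--(O2) gives $B_{R_*}(0)\subset\Omega\subset B_{R^*}(0)$, so inserting a fixed cutoff test function into the Rayleigh quotient (\ref{eq:variationalEVproblem}) yields a uniform upper bound $\lambda_R(\Omega)\le C_0$. Second, the bound $\norm{\eta}_{C^{2,\gamma}(\mathbb S^{n-1})}\le R^*$ provides uniform $C^{1,\gamma}$-control of $\nu$, $H$ and the second fundamental form. Third, pulling the Robin problem for $u$ back to the unit ball via the radial diffeomorphism $r\theta\mapsto (r/\eta(\theta))\theta$ produces a uniformly elliptic Robin problem with $C^{0,\gamma}$-coefficients whose norms depend only on the class parameters; applying standard Schauder boundary estimates together with the $L^2$-normalization of $u$ and the bound on $\lambda_R(\Omega)$ yields $\norm{u}_{C^{2,\gamma}(\overline{\Omega})}\le C_1$ independently of $\Omega\in\mathcal O$. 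In particular, every intrinsic factor in the integrand admits a uniform $L^\infty(\partial\Omega)$-bound.

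With the intrinsic factors controlled, each summand of the formula is estimated by pulling out the uniform $L^\infty$-bound and applying Cauchy--Schwarz. Terms quadratic in $v$ and $\nabla_\tau v$ contribute at most $C(\norm{v}_{L^2(\partial\Omega)}^2+\norm{\nabla_\tau v}_{L^2(\partial\Omega)}^2)\le C\norm{v}_{W^{1,2}(\partial\Omega)}^2$; the term linear in $w\cdot\nu$ contributes $C\norm{w}_{L^1(\partial\Omega)}$; and the non-critical-point correction, being the product of $\dot\lambda_R[\Omega,v,w](0)$ (linear in $v\cdot\nu$) with another linear expression in $v$, is quadratic in $v$ and absorbs into the $W^{1,2}$-term after one further Cauchy--Schwarz. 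The principal obstacle I expect is the uniform Schauder estimate in the second step: one must verify that the radial pullback remains uniformly elliptic with coefficients bounded solely in terms of $R_*,R^*,\gamma,M,\beta$, and that the Robin boundary condition transforms into a Robin-type condition with uniformly bounded coefficients. Once these routine but crucial checks are in hand, the remainder of the proof is term-by-term bookkeeping in the formula from \cite{Holle2018}.
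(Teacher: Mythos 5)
Your plan handles the comparatively routine parts of the paper's argument -- uniform $C^{2,\gamma}$ control of the eigenfunction $u$, uniform geometric bounds, the bound on $\lambda_R(\Omega)$, and the estimate of the part of the formula that is quadratic in $v$ and linear in $w$ -- and these follow essentially the same route the paper takes in its Steps 1 and 2. But the proposal rests on a misreading of the second-variation formula. You describe it as a sum of boundary integrals whose integrands factor into ``intrinsic'' $u$-derived terms and vector-field terms; in fact the formula from \cite{Holle2018} decomposes as $P(v)+Q(u')+\dot\lambda_R[\Omega,w,0](0)$, where $Q(u')=\vint{|\nabla u'|^2}-\lambda_R\vint{u'^2}+\beta\bint{u'^2}$ and $u'$ is the shape derivative of the eigenfunction, a new unknown solving its own Robin boundary-value problem with boundary data $b(v)$ linear in $v$. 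This term cannot be estimated by pulling out $L^\infty$ bounds on $u$-factors and applying Cauchy--Schwarz, because $u'$ does not appear as an explicit expression in $u$, $v$, $w$ and the geometry of $\partial\Omega$.

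The core of the proof, which the paper itself flags as the ``most challenging part,'' is a uniform a priori estimate $\norm{u'}_{W^{1,2}(\Omega)}\le C\norm{b(v)}_{L^2(\partial\Omega)}$ with $C$ independent of $\Omega\in\mathcal O$. The danger here is resonance: the equation for $u'$ has $\lambda_R(\Omega)$ on the left, so the constant in the elliptic estimate could a priori degenerate along sequences in $\mathcal O$. The paper rules this out by contradiction: normalizing a hypothetical bad sequence $\phi_k$, using the min-max principle to show $\lambda_{R,2}(\Omega_k)-\lambda_{R,1}(\Omega_k)\to 0$, and then invoking compactness of $\mathcal O$ together with simplicity of the first Robin eigenvalue of a connected domain to reach a contradiction. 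Without this uniform spectral-gap argument (or a substitute controlling $u'$ uniformly over $\mathcal O$), your estimate cannot be closed, so the proposal as written has a genuine gap at the heart of the theorem.
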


\begin{proof}
Since \cite[Theorem 3.1]{Holle2018}, we have
\begin{align}
\ddot \lambda_R[\Omega,v,w](0)=P(v)+Q(u')+\dot \lambda[\Omega,w,0](0),
\end{align}
where $P$ is a quadratic form in $v$ and $Q$ is a quadratic form in $u'$.\\
\underline{Step 1:} Before we start with the estimate, notice that $|\Omega|,\,|\partial\Omega|,\, H$ and $\lambda(\Omega)$ are uniformly bounded for $\Omega\in\mathcal O$. Furthermore, the constant in the trace theorem ($W^{1,2}(\Omega)\to L^2(\partial\Omega)$) is uniformly bounded since \cite{BrPa}. \\
We need uniform regularity bounds on $u$. Since Theorem 4.2 in \cite{Agmon1962}, we have $u\in W^{2,p}(\Omega)$ for all $p\in(1,+\infty)$. It can be shown that this regularity holds uniformly on $\mathcal O$. This implies uniform $C^{1,\gamma}$-H{\"o}lder-regularity in the interior and uniform $C^{\gamma}$-H{\"o}lder-regularity on the boundary, since $\partial \Omega$ is sufficiently smooth. A classical Schauder argument (e.g. Theorem 2.19 in \cite{GGS1991}) implies that
\begin{equation*}
\|u\|_{C^{2,\gamma}(\bar\Omega)}\le C,\quad\text{for all } \Omega\in\mathcal O.
\end{equation*}
\underline{Step 2:} We continue with an estimate for $\dot\lambda$. Applying H\"older's inequality and the uniform bounds give
\begin{equation*}
|\dot\lambda[\Omega,v,w]|\le C \norm{v}_{L^1(\partial\Omega)}.
\end{equation*}
With similar straight-forward but lengthy computations, we get
\begin{equation}
|P(v)|\le C \norm{v}^2_{W^{1,2}(\partial\Omega)}.
\end{equation}
Notice that a by-product of these computation is a uniform estimate on the second domain variation of the perimeter.\\
\underline{Step 3:} The most challenging part is the estimate for the quadratic form
\begin{equation*}
Q(u')=\vint{|\nabla u'|^2}-\lambda_R[\Omega,v,w](0)\vint{u'^2}+\beta\bint{u'^2},
\end{equation*}
where $u'$ is the shape derivative. We consider the problem
\begin{align}\label{equphi1}
\begin{split}
\Delta \phi+\lambda_R[\Omega,v,w]\phi&=-\dot\lambda_R[\Omega,v,w](0)u \quad\text{in }\Omega,\\
\partial_{\nu}\phi+\beta \phi&=b(v):=\partial_\nu(v\cdot\nabla u)+\nabla^\tau u\cdot D_v\nu+\nu\cdot D_v\nabla u-\beta(v\cdot \nabla u)\quad\text{on }\partial\Omega,\\
\vint{u\phi}&=0,
\end{split}
\end{align}
with the unique solution $\phi:=u'-\int_\Omega u' u\;\text{d}x \,u$. We want to prove 
\begin{equation}\label{eq:ProveBoundedW12NorminProofTheoremalphaconvexity}
\norm{\phi}_{W^{1,2}(\Omega)}\le C \norm{b(v)}_{L^2(\partial\Omega)}\le C \lVert v\rVert^2_{W^{1,2}(\Omega)} \quad\text{for all } \Omega\in \mathcal O,
\end{equation}
since we get
\begin{equation*}
Q(u')=\bint{b(v)\phi}
\end{equation*}
by integration by parts and (\ref{eq:EstimateSecondVariationinTheorem}) follows.\\
The prove works similar to the proof of classical a priori estimates for elliptic equations. First, we prove 
\begin{equation}\label{eq:Estimatephil2normwithcontradiction}
\norm{\phi}_{L^2(\Omega)}\le C\norm{b(v)}_{L^2(\partial\Omega)} \quad\text{for all } \Omega\in \mathcal O,
\end{equation}
and assume that such a constant does not exist. Then, there exist sequences $(\Omega_k)_k\subset \mathcal O$ and $(v_k)_k$, $v_k\in C^{2,\gamma}(\Omega_k, \mathbb R^n)$ with
\begin{equation}
\norm{\phi_k}_{L^2(\Omega_k)}\ge k\norm{b_k(v_k)}_{L^2(\partial\Omega_k)}.
\end{equation}
We rescale the solution $\phi_k$ and the linear functional $b_k$ by
\begin{equation*}
\hat\phi_k:=\frac{\phi_k}{\norm{\phi_k}_{L^2(\Omega_k)}} \quad \text{and}\quad \hat b_k:=\frac{b_k}{\norm{\phi_k}_{L^2(\Omega_k)}}.
\end{equation*}
Next, we show that the $L^2$-norm of $\nabla\hat\phi_k$ is bounded. Testing (\ref{equphi1}) by $\hat\phi_k$, integration by parts and $\beta>0$ give
\begin{align}
\vintk{|\nabla \hat\phi_k|^2}&=-\beta \bintk{\hat\phi_k^2}+\lambda_R(\Omega_k) \vintk{\hat\phi_k^2}+\bintk{\hat b_k(v_k)\hat \phi_k}\nonumber\\
&\le C+\lVert\hat b_k(v_k) \rVert_{L^2(\partial\Omega_k) } \lVert\hat \phi_k \rVert_{L^2(\partial\Omega_k)}\nonumber\\
&\le C\left(1+\left(\vintk{|\nabla \hat\phi_k|^2}\right)^{1/2}\right).\label{equgradphiharbounded}
\end{align}
This uniform inequality implies that $\lVert \nabla \hat\phi_k \rVert_{L^2(\Omega_k)}$ is bounded by a constant independent of $k$.
We summarize that $\lVert\hat \phi_k\rVert_{W^{1,2}(\Omega_k)}$ is uniformly bounded.\\
To lead the assumption to a contradiction, we consider the gap between the first and second Robin eigenvalue $\lambda_{R,1}$ and $\lambda_{R,2}$. In the following $u_k$ denotes the normalized, first Robin eigenfunction on $\Omega_k$. With the min-max principle and 
\begin{align*}
\vintk{\hat\phi_k\, u_k}=&0, \quad \text{we have} \quad \lambda_{R,2}(\Omega_k)-\lambda_{R,1}(\Omega_k)\le\mathcal R(\hat\phi_k,\beta,\Omega_k)-\lambda_{R,1}(\Omega_k),\\
\text{where}\quad &\mathcal R(\hat\phi_k,\beta,\Omega_k)=\frac{\int_{\Omega_k}|\nabla v|^2\;\text{d}x+\beta\oint_{\partial\Omega_k}v^2\;\text{d}S}{\int_{\Omega_k}v^2\;\text{d}x}.
\end{align*}
This, the definition of $\hat\phi_k$ and the uniform estimates give
\begin{align*}
\lambda_{R,2}(\Omega_k)-\lambda_{R,1}(\Omega_k)&\le\left(\mathcal R(\hat\phi_k,\beta,\Omega_k)-\lambda_{R,1}(\Omega_k)\right)\vintk{\hat\phi_k^2}\\
&=\vintk{|\nabla\hat\phi_k|^2}+\beta\bintk{\hat\phi_k^2}-\lambda_{R,1}(\Omega_k)\vintk{\hat\phi_k^2}\\
&=\bintk{(\partial_\nu \hat\phi_k+\beta\hat\phi_k) \hat\phi_k}+\dot \lambda_{R,1}[\Omega_k,v,w](0)\frac{1}{\norm{\phi_k}_{L^2(\Omega_k)}}\vintk{\hat\phi_k u_k}\\
&=\bintk{\hat b_k(v_k)\hat\phi_k}\\
&\le \frac{C}{k}\rightarrow 0 \qquad\text{as} \;k\rightarrow \infty.
\end{align*}
Since the continuity of the eigenvalues and the compactness of $\mathcal O$ w.r.t. the Hausdorff metric, there exists $\Omega^*\in\mathcal O$ such that
\begin{equation*}
\lambda_{R,2}(\Omega^*)-\lambda_{R,1}(\Omega^*)=0.
\end{equation*}
This is a contradiction because the first Robin eigenvalue of a connected domain is simple. So, we proved the uniform a priori estimate (\ref{eq:Estimatephil2normwithcontradiction}). Finally, we prove (\ref{eq:ProveBoundedW12NorminProofTheoremalphaconvexity}) with similar arguments as in (\ref{equgradphiharbounded}).
\end{proof}

\begin{theorem}
Let $\lambda$ be the first Robin eigenvalue. Then, there exists an $\alpha\in \mathbb R$ such that
\begin{align*}
\lambda_R(\Omega_t)\le(1-t)\lambda_R(\Omega_0)+t\lambda_R(\Omega_1)-\frac{\alpha}{2}t(1-t)d^2_{\mathcal O}(\Omega_0,\Omega_1)\quad \text{for all }\Omega_0,\Omega_1\in\mathcal O,
\end{align*}
where $\Omega_0=\Omega(\eta_0), \Omega_1=\Omega(\eta_1)$ and $\Omega_t=\Omega((1-t)\eta_0+t\eta_1)$.
\label{thm:AlphaConvexityNearlySpherical}
\end{theorem}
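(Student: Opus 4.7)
The plan is to reduce the claim to a pointwise lower bound on $\frac{d^2}{dt^2}\lambda_R(\Omega_t)$ and then invoke Theorem~\ref{thm:upperboundeigenvalue}. First I would verify that the affine path $\eta_t := (1-t)\eta_0 + t\eta_1$ stays inside $\mathcal O$ for every $t \in [0,1]$: condition (O1) is preserved because $\{\eta : \|\eta\|_{C^{2,\gamma}(\mathbb S^{n-1})} \le R^*,\ \eta \ge R_*\}$ is a convex set, and (O2) follows from the formula $|\Omega(\eta)| = \tfrac{1}{n}\int_{\mathbb S^{n-1}} \eta^n\,dS$ together with the convexity of $x \mapsto x^n$ on $[0,\infty)$. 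Hence $\Omega_t = \Omega(\eta_t) \in \mathcal O$ throughout, and the second-variation estimate applies at every point of the path.

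Setting $f(t) := \lambda_R(\Omega_t)$, a short calculation reduces the claim to producing a constant $\alpha \in \mathbb R$ (independent of $\eta_0,\eta_1$) with $f''(t) \ge \alpha\, d_{\mathcal O}^2(\Omega_0,\Omega_1)$ for every $t \in [0,1]$. Indeed, the auxiliary function
$$g(t) := (1-t)f(0) + t f(1) - f(t) - \tfrac{\alpha}{2}\, t(1-t)\, d_{\mathcal O}^2(\Omega_0,\Omega_1)$$
vanishes at $0$ and $1$ and satisfies $g''(t) = \alpha\, d_{\mathcal O}^2(\Omega_0,\Omega_1) - f''(t) \le 0$, so $g$ is concave on $[0,1]$ and therefore nonnegative, which is exactly the asserted inequality.

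For each $t_0 \in [0,1]$ I would exploit the affine nature of the path by writing $\eta_{t_0+s} = \eta_{t_0} + s(\eta_1 - \eta_0)$. The corresponding perturbation of $\Omega_{t_0}$ takes the form $y = x + s\,v(x)$ with $v(\eta_{t_0}(\theta)\theta) := (\eta_1(\theta) - \eta_0(\theta))\theta$ and no quadratic term, so $w \equiv 0$; one extends $v$ radially to a $C^{2,\gamma}$ vector field on $\Omega_{t_0}$, using the uniform lower bound $\eta_{t_0} \ge R_*$. Thus $f''(t_0) = \ddot\lambda_R[\Omega_{t_0},v,0](0)$, and Theorem~\ref{thm:upperboundeigenvalue} (with $w = 0$) yields $|f''(t_0)| \le C\, \|v\|_{W^{1,2}(\partial\Omega_{t_0})}^{2}$ with $C$ independent of $t_0$, $\eta_0$, $\eta_1$.

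Finally, I would compare $\|v\|_{W^{1,2}(\partial\Omega_{t_0})}$ with $\|\eta_1 - \eta_0\|_{W^{1,2}(\mathbb S^{n-1})} = d_{\mathcal O}(\Omega_0,\Omega_1)$. Pulling $v$ back along the radial diffeomorphism $\mathbb S^{n-1} \ni \theta \mapsto \eta_{t_0}(\theta)\theta \in \partial\Omega_{t_0}$, both the induced surface measure and the tangential gradient differ from their spherical counterparts by factors that are bounded above and below in terms of $R_*$, $R^*$, and $n$ thanks to (O1). This gives $\|v\|_{W^{1,2}(\partial\Omega_{t_0})}^{2} \le C'\, d_{\mathcal O}^2(\Omega_0,\Omega_1)$, hence $f''(t_0) \ge -CC'\, d_{\mathcal O}^2(\Omega_0,\Omega_1)$ and $\alpha := -CC'$ does the job. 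The main obstacle I anticipate is justifying $f \in C^2([0,1])$, which relies on simplicity of the first Robin eigenvalue on connected domains together with smooth dependence of the eigenpair on $t$ under the $C^{2,\gamma}$-regularity enforced by (O1); once that is in place, the uniform norm equivalence above follows from the same a priori estimates that define the admissible class $\mathcal O$.
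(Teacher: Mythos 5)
Your proposal is correct and follows the same route as the paper: bound $f''(t)=\ddot\lambda_R[\Omega_t,v,0](0)$ via Theorem~\ref{thm:upperboundeigenvalue}, transfer $\|v\|_{W^{1,2}(\partial\Omega_t)}$ to $\|\eta_1-\eta_0\|_{W^{1,2}(\mathbb S^{n-1})}$ by the radial pullback, and conclude by the one-dimensional concavity argument. You are in fact somewhat more careful than the published proof: you correctly carry the \emph{squared} $W^{1,2}$-norms throughout (the paper drops the exponent in two displays), you verify explicitly that the affine path stays inside $\mathcal O$, and you flag the $C^2$-regularity of $t\mapsto\lambda_R(\Omega_t)$, all of which are left implicit in the paper.
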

\begin{proof}
We consider
\begin{equation*}
h\colon [0,1]\rightarrow \mathbb R,\quad t\mapsto \lambda_R(\Omega_t) \quad \text{ for } \Omega_1,\Omega_2\in\mathcal O.
\end{equation*}
The second derivative of $h$ is given by
\begin{align*}
\ddot h(t)=\ddot \lambda_R[\Omega_t,\tilde{v}_0,0](0),
\end{align*}
where $\tilde{v}_0$ is a $C^{2,\gamma}(\Omega_t,\mathbb R^n)$ vector field with $\tilde{v}_0(x)=v_0(x):=\tfrac{\eta_1-\eta_0}{\eta_t}\!\left(\tfrac{x}{\abs{x}}\right)x$ in a neighborhood of $\partial\Omega$. Theorem \ref{thm:upperboundeigenvalue} implies
\begin{equation*}
|\ddot h(t)|=|\ddot \lambda_R[\Omega(\eta_t),\tilde{v}_0,0](0)|\le C_{\ddot\lambda}\norm{\tilde{v}_0}_{W^{1,2}(\partial\Omega_t)}=C_{\ddot\lambda}\norm{v_0}_{W^{1,2}(\partial\Omega_t)}.
\end{equation*}
A straight-forward computation gives
\begin{equation*}
\norm{v_0}_{W^{1,2}(\partial\Omega_t)}\le C\norm{\eta_0-\eta_1}_{W^{1,2}(\mathbb S^{n-1})} \quad \text{for all }\Omega_0,\Omega_1\in\mathcal O, t\in[0,1].
\end{equation*}
We obtain
\begin{equation*}
\ddot h(t)\ge -|\ddot h(t)|\ge \alpha \norm{\eta_1-\eta_0}_{W^{1,2}(\mathbb S^{n-1})},
\end{equation*}
where $\alpha$ is independent of $\Omega_0,\Omega_1\in\mathcal O$ and $t\in[0,1]$.
This implies the $\alpha$-convexity inequality with $\alpha<0$.
\end{proof}
We use this convexity property to prove existence of a contractive semi-group.
\begin{theorem}
The first Robin eigenvalue $\lambda_R\colon \mathcal O\to \mathbb R;\;\Omega\mapsto\lambda_R(\Omega)$ and the metric space $(\mathcal O,d_{\mathcal O})$ satisfies all assumptions in Theorem \ref{thm:ExistenceSemigroups}.
\label{thm:ExistenceNearlySpherical}
\end{theorem}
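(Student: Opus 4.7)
The plan is to verify the four hypotheses of Theorem \ref{thm:ExistenceSemigroups} in turn. The non-trivial input --- the $\alpha$-convexity estimate --- is already packaged as Theorem \ref{thm:AlphaConvexityNearlySpherical}, so what remains is to check completeness of $(\mathcal{O},d_{\mathcal{O}})$, lower semi-continuity of $\lambda_R$, the local lower bound, and then to recast Theorem \ref{thm:AlphaConvexityNearlySpherical} in the form demanded by Assumption \ref{ass:AlphaConvexity}.

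For completeness, I would take a Cauchy sequence $(\Omega(\eta_k))$; the parametrizations $\eta_k$ are then Cauchy in $W^{1,2}(\mathbb{S}^{n-1})$ and converge to some $\eta \in W^{1,2}(\mathbb{S}^{n-1})$. The uniform bound $\|\eta_k\|_{C^{2,\gamma}(\mathbb{S}^{n-1})}\le R^*$ from (O1) together with Arzel\`a--Ascoli yields a subsequence converging in $C^{2,\gamma'}$ for any $\gamma'<\gamma$, whose limit must agree with $\eta$. Passing the H\"older difference quotient of the second derivatives to the limit gives $\|\eta\|_{C^{2,\gamma}}\le R^*$; the pointwise bound $\eta\ge R_*$ and the volume bound $|\Omega(\eta)|\le M$ are preserved under the same convergence. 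Hence $\eta\in\mathcal{O}$. The properness and bound-from-below are immediate, since the Rayleigh quotient in (\ref{eq:variationalEVproblem}) is non-negative for $\beta>0$ so $\lambda_R\ge 0$ everywhere on $\mathcal{O}$; in particular Assumption \ref{ass:BoundedFromBelow} holds with $m_*=0$. For lower semi-continuity of $\lambda_R$, I would combine the $W^{1,2}$-convergence of $\eta_k$ with the uniform $C^{2,\gamma}$ bound to extract $C^{1,\gamma'}$-convergence of the boundaries, from which continuity of the Robin eigenvalue follows by standard domain-perturbation arguments (this is essentially the same input exploited inside the contradiction step of Theorem \ref{thm:upperboundeigenvalue}).

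The key conceptual step is to translate Theorem \ref{thm:AlphaConvexityNearlySpherical} into (\ref{eq:AssumptionH1}). The map $\Omega(\eta)\mapsto\eta$ embeds $\mathcal{O}$ isometrically into the Hilbert space $W^{1,2}(\mathbb{S}^{n-1})$, and I would first check that the image is a \emph{convex} subset: the $C^{2,\gamma}$ norm is convex (so (O1) passes to convex combinations), the lower bound $R_*$ is preserved trivially, and the volume
$$|\Omega(\eta)| = \frac{1}{n}\int_{\mathbb{S}^{n-1}} \eta(\theta)^n\, d\sigma(\theta)$$
is convex in $\eta\ge 0$ (since $s\mapsto s^n$ is convex on $[0,\infty)$ for $n\ge 2$), so (O2) is also preserved. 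Taking the straight-line curve $\gamma(t)=(1-t)\eta_0+t\eta_1$ and invoking the parallelogram identity
$$d_{\mathcal{O}}^2(x,\gamma(t)) = (1-t)\,d_{\mathcal{O}}^2(x,y_0) + t\,d_{\mathcal{O}}^2(x,y_1) - t(1-t)\,d_{\mathcal{O}}^2(y_0,y_1),$$
exactly as in Remark \ref{rmk:alphaconvexityHilbert}, the inequality (\ref{eq:AssumptionH1}) collapses algebraically to
$$\phi(\gamma(t)) \le (1-t)\phi(y_0) + t\phi(y_1) - \tfrac{\alpha}{2}t(1-t)\,d_{\mathcal{O}}^2(y_0,y_1),$$
which is precisely the conclusion of Theorem \ref{thm:AlphaConvexityNearlySpherical}. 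The restriction $1+\alpha h>0$ in Assumption \ref{ass:AlphaConvexity} then just means we only consider step sizes $h<1/|\alpha|$, which is harmless.

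The main obstacles I expect are (a) confirming that $\mathcal{O}$ is genuinely a convex subset of the ambient Hilbert space --- the volume convexity via $s\mapsto s^n$ is the one non-trivial piece --- and (b) making the lower semi-continuity step rigorous under the precise topology $d_{\mathcal{O}}$, since continuity of Robin eigenvalues is classical only under somewhat stronger modes of convergence than $W^{1,2}$ alone; the uniform $C^{2,\gamma}$ bound from (O1) is exactly what is needed to bridge this gap.
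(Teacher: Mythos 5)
Your proposal follows the same overall route as the paper: completeness via the isometric identification $\Omega(\eta)\mapsto\eta$ into the Hilbert space $W^{1,2}(\mathbb{S}^{n-1})$, non-negativity for the lower bound, lower semi-continuity from the constrained topology being strong enough, and Assumption~\ref{ass:AlphaConvexity} from Theorem~\ref{thm:AlphaConvexityNearlySpherical} together with the Hilbert-space parallelogram identity and convexity of the image of $\mathcal O$. The differences are in the level of detail rather than the line of attack. Where the paper simply asserts that the image of $\mathcal O$ is a closed convex subset of $W^{1,2}(\mathbb{S}^{n-1})$, you actually verify the closedness (Arzel\`a--Ascoli on the uniform $C^{2,\gamma}$ bound, then passage to the limit in the constraints) and the convexity (convexity of the $C^{2,\gamma}$-norm ball, the pointwise lower bound, and crucially the volume constraint via convexity of $s\mapsto s^n$ in $|\Omega(\eta)|=\tfrac1n\int_{\mathbb{S}^{n-1}}\eta^n\,d\sigma$); these are genuinely the non-trivial points behind the paper's one-line claim and it is good that you isolate them. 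For lower semi-continuity, the paper routes through $\gamma$-convergence and the uniform $\varepsilon$-cone property, whereas you upgrade $W^{1,2}$-convergence of the $\eta_k$ to $C^{1,\gamma'}$-convergence of boundaries (again using the uniform $C^{2,\gamma}$ bound) and invoke classical domain-perturbation continuity; both are valid, and both exploit exactly the compactness baked into (O1), so this is a cosmetic rather than a substantive divergence.

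One small point worth flagging: in Assumption~\ref{ass:AlphaConvexity} the requirement is for all $h$ with $1+\alpha h>0$, and since here $\alpha<0$ this does indeed restrict to $h<1/|\alpha|$ as you say; the existence result Theorem~\ref{thm:ExistenceSemigroups} is formulated to absorb this (it only needs the estimate for small $h$), so no additional argument is required, but it deserves the explicit mention you give it.
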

\begin{proof}
$(\mathcal O,d_{\mathcal O})$ is isometrically isomorphic to a closed subset $\mathcal P$ of the Hilbert space $W^{1,2}(\mathbb S^{n-1})$ by $\mathcal O\to\mathcal P,\,\Omega(\eta)\mapsto\eta$. This implies that $(\mathcal O,d_{\mathcal O})$ is a complete metric space. Obviously, the Robin eigenvalue is proper and Assumption \ref{ass:BoundedFromBelow} is satisfied. We recall the fact, that the Robin eigenvalue is continuous with respect to $\gamma$-convergence on domains which satisfy the uniform $\epsilon$-cone property. Since the topology induced by $d_{\mathcal O}$ is stronger than the one induced by $\gamma$-convergence and since $\mathcal O$ satisfies the uniform $\epsilon$-cone property, we get the lower semi-continuity of $\lambda$. Assumption \ref{ass:AlphaConvexity} follows from Theorem \ref{thm:AlphaConvexityNearlySpherical}, Remark \ref{rmk:alphaconvexityHilbert} and the fact that $\mathcal O$ is isometrically isomorphic to a convex subset of a Hilbert space.
\end{proof}
\begin{remark}
We explain, why we take the $W^{1,2}$-norm in the definition of the metric. The uniform estimate in Theorem \ref{thm:upperboundeigenvalue} is also valid for $-\lambda_R$ and we can use the arguments above to construct a contraction semi-group for $-\lambda_R$. A similar argument as in Remark \ref{rmk:negativebeta} shows that it is necessary to measure the boundary oscillations. On the other hand the estimate in Theorem \ref{thm:upperboundeigenvalue} is more than we need and a lower bound on $\ddot\lambda_R$ is enough.
\end{remark}

\bibliographystyle{plain}
\bibliography{Literatur}
\end{document}